\documentclass[a4paper, reqno, 11pt]{amsart}

\usepackage{amsmath}
\usepackage{amssymb}
\usepackage{amsfonts}


\textheight 22cm \textwidth 15cm \voffset=-0.5cm
\oddsidemargin=0.1cm \evensidemargin=0.1cm \topmargin=-0.5cm

\def \r{\rho}

\newcommand{\ba}{\begin{array}}
\newcommand{\ea}{\end{array}}

\newcommand{\ben}{\begin{equation}}
\newcommand{\een}{\end{equation}}

\newcommand{\bean}{\begin{eqnarray}}
\newcommand{\eean}{\end{eqnarray}}

\newcommand{\bea}{\begin{eqnarray*}}
\newcommand{\eea}{\end{eqnarray*}}

\newcommand{\bd}{\begin{description}}
\newcommand{\ed}{\end{description}}

\newcommand{\bc}{\begin{center}}
\newcommand{\ec}{\end{center}}

\newcommand{\be}{\begin{enumerate}}
\newcommand{\ee}{\end{enumerate}}

\newcommand{\ds}{\displaystyle}

\newtheorem{theo}{Theorem}[section]

\newtheorem{lem}[theo]{Lemma}
\newtheorem{rem}[theo]{Remark}


\author{Najoua El Ghani}
\address{Universit\'e de Sousse, Institut Sup\'erieur d'Informatique et des Technologies de Communication, D\'epartement de R\'eseaux et Multim\'edia, Rue G.P.1 Hammam Sousse-4011, Tunisie} \email{\sl 	 Najoua.ElGhani@fst.rnu.tn}


\author{Mohamed Majdoub}
\address{Universit\'e de Tunis El Manar, Facult\'e des Sciences de Tunis,  Laboratoire \'equations aux d\'eriv\'ees partielles (LR03ES04), 2092 Tunis, Tunisie} \email{\sl mohamed.majdoub@fst.rnu.tn}
\thanks{The authors are grateful to the Laboratory of
PDE at the Faculty of Sciences of Tunis.}

\title[Global well posedness for the DD-M system]
{Global well posedness for the drift-diffusion-Maxwell system in 2D}
\date{\today}


\begin{document}

\begin{abstract}
We prove global existence of strong solutions to the drift-diffusion-Maxwell system in two space dimension. We also provide an exponential growth estimate for the $H^1$ norm of the solution.
\end{abstract}


\subjclass[2010]{ 35Q61, 76R50, 49K40}


\keywords{ Maxwell system, Drift-Diffusion model, Global well posedness.}

\maketitle



\section{Introduction}


We consider a coupled system of equations consisting of the equation of the charge and current density and
 Maxwell's equations of electromagnetism, the coupling comes from the Lorentz force.

\subsection{The model}


We consider the Drift-Diffusion-Maxwell system (DD-M) for short, namely:
\begin{equation}\label{s1}\left\{%
\begin{array}{lll}
\partial_t\rho+div_xj =0,& \; & \mbox{in }(0,\;T)\times\mathbb{R}^2,\\
\partial_{t}E-curl_{x}B=-j,& \; & \mbox{in }(0,\;T)\times\mathbb{R}^2,\\
\partial_{t}B+curl_{x}E=0,& \; & \mbox{in }(0,\;T)\times\mathbb{R}^2,\\
div_{x}E=\rho,& \; & \mbox{in }(0,\;T)\times\mathbb{R}^2,\\
div_{x}B=0,& \; & \mbox{in }(0,\;T)\times\mathbb{R}^2,\\
j= \rho E-\nabla_x\rho,& \; & \\
\end{array}%
\right.\end{equation}
where $E,\,B$ are the electric and magnetic fields and $\rho,\, j$
are respectively the charge and current densities. We also supplement (\ref{s1}) with the following initial data
\begin{equation}\label{d1}\rho(t=0)=\rho_0,\,\,B(t=0)=B_0,\,\,E(t=0)=E_0.\end{equation}
 Here, $\rho,\,B,\,E$ are defined on $\mathbb{R}^{2}$ and take their values in $\mathbb{R}^{3}$, i.e., $E=(E_{1}(t,x),E_{2}(t,x),E_{3}(t,x))$, $B=(B_{1}(t,x),B_{2}(t,x),B_{3}(t,x))$, $\rho=\rho(t,x)$ for any $(t,x)\in(0,\;T)\times\mathbb{R}^{2}$. The notation $curl_{x}B$ corresponds to
\begin{equation}\label{x}
\nabla\wedge B =
\left(\begin{array}{c}
  \partial_1 \\
  \partial_2  \\
0
\end{array}\right)\wedge\left(\begin{array}{c}
  B_1 \\
 B_2  \\
B_3
\end{array}\right)
\end{equation}
The system (\ref{s1}) has the following energy identity:\begin{eqnarray}\label{e}
\frac{1}{2}\partial_{t}[\|\rho\|^{2}_{L^{2}}+\|E\|^{2}_{L^{2}}+\|B\|^{2}_{L^{2}}]+\|\nabla\rho\|^{2}_{L^{2}}&\leq&0\end{eqnarray}
which is similar to the energy identity for the Maxwell-Navier-Stokes system used in \cite{Mas}.

Before stating our main result, let us mention that the Drift-Diffusion-Maxwell model (\ref{s1}) is derived from a Vlasov-Maxwell-Fokker-Planck system \cite{gh1} which is motivated from plasma physics and  Drift-Diffusion models can also be derived from other singular limits. We refer for instance to \cite{gh} where the Drift-Diffusion-Poisson model is derived from a Vlasov-Poisson-Fokker-Planck system. Let us recall that the Drift-Diffusion model is a standard model for semiconductors physics and suited for numerical computations we refer to \cite{A T2004,gaj,m-t} for a discussion about this model. Here, we would like to explain a little bit the relevance of the model. The first equation in (\ref{s1}) is the mass conservation equation (Continuity Equation). The second equation is the Ampere-Maxwell equation which includes here the displacement current $\partial_{t}E$. The third equation of (\ref{s1}) is the Faraday's law and finally, the forth and fifth equation are the Gauss's law (electric and magnetic).
\subsection{Statement of the result}
We want to prove in 2 space dimension, the global existence of solutions such that $\rho_0\in L^{\infty}(\mathbb{ R}^{2})\cap H^{1}(\mathbb{R}^{2})$ and $B_0,\,\,E_0\in H^{1}(\mathbb{R}^{2}).$ The proof uses the conservation of the energy as well as a logarithmic estimate to bound the $L^{\infty}$ norm of $\rho$ in terms of the $H^{1}$ norm of $\rho$.

 Our main result is the following:
\begin{theo}Take $\rho_0\in L^{\infty}(\mathbb{ R}^{2})\cap H^{1}(\mathbb{R}^{2})$ and $B_0,\,\,E_0\in H^{1}(\mathbb{R}^{2}).$ Then, there exists a unique global solution $(\rho, E, B)$ of (\ref{s1}) such that for all $T>0$, $\rho\in C([0, T); L^{2})\cap L^{2}(0, T; H^{1})$ and $E$, $B\in C([0, T); H^{1})$. Moreover, $j\in L^{2}(0, T; L^{2})\cap L^{2}(0, T; H^{1})$ and $\rho\in L^{1}(0, T; H^{2})$. In addition, the energy identity (\ref{e}) holds and we have the following exponential growth estimate for all $t>0$:\\
\begin{equation}\label{grow}\|\rho\|_{L^{1}(0, t; H^{1})}+\|(E, B)(t)\|_{H^{1}}\leq (1+\|(E_{0}, B_{0})\|_{H^{1}})e^{C_{0}(t+1)},\end{equation}
where $C_{0}=C[\|\rho_0\|_{L^{2}}^{2}+\|B_0\|_{L^{2}}^{2}+\|E_0\|_{L^{2}}^{2}+1]$ for some constant $C$.
\end{theo}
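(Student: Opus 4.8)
The plan is to prove the theorem by a standard energy method combined with the two-dimensional logarithmic Sobolev (Brezis--Gallouet--Wainger type) inequality. I would organize the argument in three stages: (i) local existence and uniqueness via a fixed-point scheme; (ii) a priori estimates that prevent blow-up, yielding global existence; (iii) the quantitative exponential growth bound \eqref{grow}.

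\textbf{Step 1: Local well-posedness.} First I would set up a Banach fixed-point (or Galerkin/mollification) argument in the space $X_T = \{\rho \in C([0,T);H^1)\cap L^2(0,T;H^2),\ E,B\in C([0,T);H^1)\}$. The Maxwell block $(\p_t E - curl_x B = -j,\ \p_t B + curl_x E = 0)$ is a linear symmetric hyperbolic system, so given $j\in L^1(0,T;H^1)$ it produces $(E,B)\in C([0,T);H^1)$ with $\|(E,B)(t)\|_{H^1}\le \|(E_0,B_0)\|_{H^1} + \int_0^t\|j\|_{H^1}$; the Gauss constraints $div_x E=\rho$, $div_x B=0$ propagate from the initial data using the continuity equation. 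The drift-diffusion equation $\p_t\rho - \Delta\rho = -div_x(\rho E)$ is a parabolic equation for which, given $E\in C([0,T);H^1)$ and $\rho_0\in H^1\cap L^\infty$, maximal $L^2$-regularity gives $\rho\in C([0,T);H^1)\cap L^2(0,T;H^2)$. Then $j=\rho E-\nabla\rho$ lies in the right space and a contraction on a short time interval closes the loop. Uniqueness follows by the same energy estimates applied to the difference of two solutions.

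\textbf{Step 2: A priori estimates and globalization.} The energy identity \eqref{e} immediately gives, for all $t$,
\begin{equation}\label{en-control}
\|\rho(t)\|_{L^2}^2 + \|E(t)\|_{L^2}^2 + \|B(t)\|_{L^2}^2 + 2\int_0^t\|\nabla\rho\|_{L^2}^2 \le \|\rho_0\|_{L^2}^2+\|E_0\|_{L^2}^2+\|B_0\|_{L^2}^2.
\end{equation}
Next one needs an $L^\infty$ bound on $\rho$: testing the equation $\p_t\rho-\Delta\rho+div_x(\rho E)=0$ against $|\rho|^{p-2}\rho$ and using $div_x E=\rho$ gives $\frac{d}{dt}\|\rho\|_{L^p}^p \le C_p\|\rho\|_{L^{p+1}}^{p+1}$ or, more cleanly, one estimates $\|\rho\|_{L^\infty}$ by a Moser-type / De Giorgi iteration or directly via the $H^1$ control. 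Then I would differentiate once in space: applying $\nabla$ to the $\rho$-equation and testing against $\nabla\rho$ yields
\begin{equation}\label{h1rho}
\frac{1}{2}\frac{d}{dt}\|\nabla\rho\|_{L^2}^2 + \|\Delta\rho\|_{L^2}^2 \le \int |\nabla div_x(\rho E)|\,|\nabla\rho| \le C\big(\|\rho\|_{L^\infty}\|\nabla E\|_{L^2}+\|E\|_{L^4}\|\nabla\rho\|_{L^4}\big)\|\Delta\rho\|_{L^2},
\end{equation}
and simultaneously, multiplying the Maxwell system by $(-\Delta E,-\Delta B)$ gives $\frac{d}{dt}\|\nabla(E,B)\|_{L^2}^2 \le C\|\nabla j\|_{L^2}\|\nabla(E,B)\|_{L^2}$ with $\|\nabla j\|_{L^2}\lesssim \|\rho\|_{L^\infty}\|\nabla E\|_{L^2}+\|E\|_{L^\infty}\|\nabla\rho\|_{L^2}+\|\Delta\rho\|_{L^2}$. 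The crucial point, and the main obstacle, is to control $\|E\|_{L^\infty}$: it is not bounded by $\|E\|_{H^1}$ in 2D, so one invokes the logarithmic inequality $\|E\|_{L^\infty} \le C\|E\|_{H^1}\big(1+\log^{1/2}(1+\|E\|_{H^2})\big)$ — but $E$ has no $H^2$ control from Maxwell. Instead one rewrites things so that only $\|\rho\|_{L^\infty}$ (which we control) and the product $\|E\|_{H^1}\log^{1/2}(\cdots)$ appear, closing a Gronwall inequality of the form $\frac{d}{dt}Y \le C_0\,Y\log(e+Y)$ where $Y = e + \|\nabla\rho\|_{L^2}^2 + \|\nabla(E,B)\|_{L^2}^2$; Osgood's lemma then gives a double-exponential bound, which the energy dissipation $\int_0^t\|\Delta\rho\|_{L^2}^2$ can be used to upgrade to the single-exponential bound \eqref{grow}. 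The absorption of the bad terms using Young's inequality into $\|\Delta\rho\|_{L^2}^2$ on the left of \eqref{h1rho}, and keeping the logarithm linear so that Osgood/Gronwall applies, is the delicate bookkeeping.

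\textbf{Step 3: The growth estimate.} Having closed the $H^1$ bound on a Gronwall inequality driven by the constant $C_0 = C[\|\rho_0\|_{L^2}^2+\|B_0\|_{L^2}^2+\|E_0\|_{L^2}^2+1]$, I would integrate to get $\|(E,B)(t)\|_{H^1}\le (1+\|(E_0,B_0)\|_{H^1})e^{C_0(t+1)}$, and then bound $\|\rho\|_{L^1(0,t;H^1)}\le t^{1/2}\|\rho\|_{L^2(0,t;H^1)}$ plus the contribution from $\int_0^t\|\Delta\rho\|_{L^2}$, all of which are controlled by the same exponential. Finally, the regularity statements $j\in L^2(0,T;L^2)\cap L^2(0,T;H^1)$ and $\rho\in L^1(0,T;H^2)$ follow from $j=\rho E-\nabla\rho$ together with \eqref{en-control}, the $L^\infty$ bound on $\rho$, and the $L^2(0,T;H^2)$ regularity of $\rho$ obtained in Step 1, now known to persist globally. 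The uniqueness part is already handled in Step 1 and extends globally since two global solutions agree on every $[0,T]$.
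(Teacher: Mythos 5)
Your skeleton (approximation scheme for local existence, energy identity, $H^1$ energy estimates, a logarithmic inequality, Gronwall) is the right one, but the two steps you flag as delicate are exactly the ones you do not actually supply, and as written they would fail. First, in the $H^1$ estimate you declare the main obstacle to be $\|E\|_{L^{\infty}}$, invoke a Brezis--Gallouet bound requiring $\|E\|_{H^{2}}$, concede that no $H^2$ control of $E$ exists, and then say ``one rewrites things'' so that a Gronwall of the form $\frac{d}{dt}Y\leq C_{0}Y\log(e+Y)$ closes; that rewriting is the proof, and it is missing. In fact no $L^{\infty}$ bound on $E$ is needed at all: in the paper the dangerous terms are tamed by integrating by parts and using the Gauss law $div_{x}E=\rho$, e.g. $-\int\nabla\rho\,E\cdot\nabla E\,dx=\frac12\int\nabla^{2}\rho\,|E|^{2}\,dx$ and $-\int(E\cdot\nabla)\nabla\rho\cdot\nabla\rho\,dx=\frac12\int\rho|\nabla\rho|^{2}\,dx$, after which the Gagliardo--Nirenberg inequality \eqref{GN} and Young's inequality absorb $\|\nabla^{2}\rho\|_{L^{2}}$ into the dissipation; the surviving Gronwall coefficient $4+\|E\|^{2}_{L^{2}}+4\|\nabla\rho\|^{2}_{L^{2}}$ is integrable in time by the basic energy identity \eqref{ident}, so a plain Gronwall closes the $H^1$ bound and gives $\nabla^{2}\rho\in L^{2}_{t,x}$ --- no Osgood lemma, no logarithm. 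Moreover, your Osgood route only yields a double-exponential bound, and the asserted ``upgrade'' to the single exponential \eqref{grow} via the dissipation $\int_{0}^{t}\|\Delta\rho\|^{2}_{L^{2}}$ is stated with no mechanism; that upgrade is precisely the quantitative content of the theorem.

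Second, the place where the logarithm genuinely enters the paper is different from where you put it, and it concerns $\rho$, not $E$. The growth estimate is obtained from the Duhamel bound for Maxwell, $\|F(t)\|_{H^{1}}\leq\|F_{0}\|_{H^{1}}+\|j\|_{L^{1}_{t}H^{1}}$ (Lemma 1.3), which after estimating $j=\rho E-\nabla\rho$ leads to a Gronwall exponent $\int_{0}^{t}\bigl(2\|\rho\|_{L^{\infty}}+C_{0}\|\nabla\rho\|_{L^{2}}\bigr)ds$; the time-integrated $L^{\infty}$ norm of $\rho$ is then controlled by the Littlewood--Paley estimate of Lemma \ref{lem1}, $\|\rho\|_{L^{1}_{t}L^{\infty}_{x}}\leq C_{0}^{1/2}T^{1/2}+CT^{1/2}\|\nabla\rho\|_{L^{2}_{t,x}}\log\bigl(e+\|\nabla^{2}\rho\|_{L^{2}_{t,x}}/\|\nabla\rho\|_{L^{2}_{t,x}}\bigr)$, where $\|\nabla\rho\|_{L^{2}_{t,x}}\leq C_{0}^{1/2}$ by the energy and $\|\nabla^{2}\rho\|_{L^{2}_{t,x}}$ is controlled by the $H^1$ estimate; taking logarithms produces a self-improving inequality for $\log(e+\|F(t)\|_{H^{1}})$ that yields exactly the rate $e^{C_{0}(t+1)}$ with $C_{0}$ depending only on the $L^{2}$ norms of the data. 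Related to this, your claim that $\|\rho\|_{L^{\infty}}$ is controlled ``directly via the $H^{1}$ control'' is false in two dimensions ($H^{1}\not\hookrightarrow L^{\infty}$), and the sketched $L^{p}$ estimate $\frac{d}{dt}\|\rho\|^{p}_{L^{p}}\lesssim\|\rho\|^{p+1}_{L^{p+1}}$ does not close by itself; the paper never needs a pointwise-in-time $L^{\infty}$ bound on $\rho$, only the time-integrated one furnished by the logarithmic lemma. Until you replace the ``rewriting'' step by the integration-by-parts/Gauss-law structure (or an equivalent), and give an actual derivation of the single-exponential rate, the proposal has a genuine gap at the heart of the argument.
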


In the next Section 1.3, we give preliminaries about some regularity estimates. In Section 2, we prove some a priori estimate and derive the growth bound (\ref{grow}). In Section 3, we prove Theorem 1.1 by using a Galerkin approximation.
\subsection{ Preliminaries}

The system (\ref{s1}) has the following energy identity:
$$\frac{1}{2}\partial_{t}[\|\rho\|^{2}_{L^{2}}+\|E\|^{2}_{L^{2}}+\|B\|^{2}_{L^{2}}]+\|\nabla\rho\|^{2}_{L^{2}}\leq0$$
Indeed, multiplying the first equation of (\ref{s1}) by $\rho$, the second one by $E$ and the third one by $B$ and integrating by parts, the energy estimate reads
$$\frac{1}{2}\partial_{t}[\|\rho\|^{2}_{L^{2}}+\|E\|^{2}_{L^{2}}+\|B\|^{2}_{L^{2}}]=$$
$$-\int\rho\nabla.j dx+\int (curl_{x}(B).E -curl_{x}(E).B)dx+\int\nabla\rho.E dx-\int\rho|E|^{2} dx.$$
Since, $j= \rho E-\nabla_x\rho$ and $\nabla.j=\rho div E+ \nabla\rho.E-\triangle\rho$
Then, $\ds\int\rho\nabla.j dx=\int\rho^{2}\underbrace{ div E}_{\rho} dx+ \int\rho\nabla\rho.E dx-\int\rho\triangle\rho dx$
Moreover, $\ds\int (curl_{x}(B).E-curl_{x}(E).B) dx=0$
We obtain,
$$\frac{1}{2}\partial_{t}[\|\rho\|^{2}_{L^{2}}+\|E\|^{2}_{L^{2}}+\|B\|^{2}_{L^{2}}]+\|\nabla\rho\|^{2}_{L^{2}}=$$
$$\underbrace{-\int\rho E \nabla\rho dx}_{I_{1}}\underbrace{-\int\rho^{3}dx}_{I_{2}}\underbrace{-\int\rho E^{2}dx}_{I_{3}}+\underbrace{\int E \nabla\rho dx}_{I_{4}}.$$
We have $$I_{1}=-\int E \nabla(\frac{\rho^{2}}{2}) dx=\int div E \,\frac{\rho^{2}}{2} dx=\frac{1}{2}\int\rho^{3}dx.$$
So, $I_{1}+I_{2}=-\displaystyle\frac{1}{2}\int\rho^{3}dx\leq0$, $I_{3}\leq0$
and $$I_{4}=-\int div E \,\rho dx=-\int\rho^{2}dx\leq0.$$
which yields the desired energy estimate.
\begin{rem}
The energy identity given here is not sufficient to deduce that $E, B\in L^{\infty}(0, T;\;H^{1})$
\end{rem}
We will use the following lemma giving regularity result for the Maxwell equation:
\begin{lem}\cite{Mas}
\;If $(E,\,B)$ solves
\begin{equation}\label{s}\left\{%
\begin{array}{lll}
\partial_{t}E-curl_{x}B=-j,& \; & \;\\
\partial_{t}B+curl_{x}E=0,& \; & \;\\
E(t=0)=E_{0},\,\,B(t=0)=B_{0}& \; & \;\\
\end{array}%
\right.\end{equation}
On some time interval $(0, T)$ then, we have
$$\|(E,\,B)\|_{C([0, T);\;H^{1})}\leq\|(E_{0},\,B_{0})\|_{H^{1}}+\|j\|_{L^{1}(0, T;\;H^{1})}$$
\end{lem}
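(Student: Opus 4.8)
The plan is to prove this by the classical energy method, using two facts: the Maxwell evolution $\partial_t E=curl_x B$, $\partial_t B=-curl_x E$ is generated by a skew-adjoint operator on $L^2(\mathbb{R}^2)$, and that operator has constant coefficients, so it commutes with $\partial_1$ and $\partial_2$. Writing the system as $\partial_t U=\mathcal{M}U+F$ with $U=(E,B)$, $F=(-j,0)$, the pair $(\partial_i E,\partial_i B)$ solves the same system with source $(-\partial_i j,0)$. Hence it suffices to prove the $L^2$ bound $\|U(t)\|_{L^2}\le\|U(0)\|_{L^2}+\|j\|_{L^1(0,t;L^2)}$, apply it to $U$ and to $\partial_1 U,\partial_2 U$, sum the three inequalities, and take the supremum over $t\in[0,T)$.

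For the $L^2$ bound I would multiply the first equation of \eqref{s} by $E$ and the second by $B$, integrate over $\mathbb{R}^2$, and add. The cross term $\int_{\mathbb{R}^2}\bigl(curl_x B\cdot E-curl_x E\cdot B\bigr)\,dx$ vanishes: writing $curl_x=\nabla\wedge\cdot$ with $\nabla=(\partial_1,\partial_2,0)$ and integrating by parts (no boundary terms on $\mathbb{R}^2$ for $H^1$ fields) gives $\int curl_x B\cdot E\,dx=\int B\cdot curl_x E\,dx$. There remains
$$\frac12\frac{d}{dt}\bigl(\|E\|_{L^2}^2+\|B\|_{L^2}^2\bigr)=-\int_{\mathbb{R}^2}j\cdot E\,dx\le\|j\|_{L^2}\,\|(E,B)\|_{L^2}.$$
With $y(t)=\|(E,B)(t)\|_{L^2}$ this says $\frac12\frac{d}{dt}y^2\le\|j\|_{L^2}\,y$; to avoid dividing by $y$ at its zeros I would estimate $\frac{d}{dt}\sqrt{y^2+\varepsilon}\le\|j\|_{L^2}$, integrate over $[0,t]$ and let $\varepsilon\to0$, obtaining $y(t)\le y(0)+\|j\|_{L^1(0,t;L^2)}$.

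Carrying out the same computation for $(\partial_i E,\partial_i B)$ gives $\|(\nabla E,\nabla B)(t)\|_{L^2}\le\|(\nabla E_0,\nabla B_0)\|_{L^2}+\|\nabla j\|_{L^1(0,t;L^2)}$; adding this to the previous line and taking $\sup_{t<T}$ produces exactly the claimed inequality. The continuity $t\mapsto(E,B)(t)\in H^1$ is not delivered by the energy inequality but follows from the representation $U(t)=e^{t\mathcal{M}}U_0+\int_0^t e^{(t-s)\mathcal{M}}F(s)\,ds$ and the strong continuity of the unitary Maxwell group on $H^1$. The only genuine obstacle is making the formal energy identity rigorous — one needs enough regularity on $(E,B)$ to integrate by parts and to differentiate $\|(E,B)\|_{L^2}^2$ in time; I would handle this as elsewhere in the paper, by performing the computation on smooth (Galerkin or mollified) approximations, deriving the bound uniformly, and passing to the limit, or equivalently by density of smooth data together with the strong continuity of $e^{t\mathcal{M}}$ on $L^2$ and $H^1$. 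Apart from this standard point the argument is routine.
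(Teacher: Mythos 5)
Your proof is correct, but it takes a more hands-on route than the paper. The paper disposes of the lemma in two lines: write $F=(E,B)$, $L(E,B)=(\mathrm{curl}\,B,-\mathrm{curl}\,E)$, use the Duhamel formula $F(t)=e^{tL}F_0+\int_0^t e^{(t-s)L}(-j(s),0)\,ds$, and observe that $e^{tL}$ is an isometry (a unitary group) on every $H^s$; the estimate and the continuity in time then follow at once. Your energy-method argument proves the same unitarity in differential form: the cancellation $\int(\mathrm{curl}\,B\cdot E-\mathrm{curl}\,E\cdot B)\,dx=0$ is exactly the skew-adjointness of $L$, and commuting with $\partial_1,\partial_2$ replaces the statement that $e^{tL}$ is also an isometry on $\dot{H}^1$. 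What the paper's route buys is brevity, the estimate in all $H^s$ simultaneously, and the $C([0,T);H^1)$ regularity without extra work; what yours buys is a self-contained elementary proof that does not presuppose the semigroup framework, at the cost of the approximation step you mention to make the formal identity rigorous (which you handle appropriately, and which is in fact where the Duhamel representation re-enters your argument anyway, for continuity in time). One small point: as written, summing the three $L^2$ inequalities for $U$, $\partial_1 U$, $\partial_2 U$ yields the claimed bound only up to a harmless constant such as $\sqrt{3}$, since the $H^1$ norm is the root of a sum of squares while your right-hand side is a sum of norms; to recover the stated constant $1$, add the three energy identities before integrating, i.e. run the argument directly on $y(t)=\|(E,B)(t)\|_{H^1}$, obtaining $\tfrac12\tfrac{d}{dt}y^2\le\|j\|_{H^1}\,y$ and then the same $\varepsilon$-regularized integration you describe.
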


We can use for the proof of this lemma the Duhamel formula and write $F=e^{tL}F_{0}+\ds\int_{0}^{t}e^{(t-s)L}f(s)ds$
where $F=(E,\;B)$, $L$ is the operator define by $L(E,\;B)=(curl\; B,\;-curl\; E)$ and $f(s)=(j(s),\;0)$. It is then clear that $e^{tL}$ defines an isometry on $H^{s}$ and hence the claim follows.

We refer to \cite{gh} for the proof of the following result which give a regularity of the density:
\begin{lem}
\;Let $\r$ be a positive function such that $\r\in
L^\infty(0,T;\; L^1(\mathbb{R}^2))$, satisfying
\begin{equation}\label{smax}\left\{%
\begin{array}{lll}
\nabla_x\sqrt{\rho}-\frac{1}{2}E\sqrt{\rho} =H\in L^2(0,T;\; L^2(\mathbb{R}^2)),& \; & \;\\
div_x E=\rho.&\; & \;\\
E\in L^\infty(0,T;\; L^2(\mathbb{R}^d)),& \; & \;\\
\end{array}%
\right.\end{equation} then  $$\r\in L^2(0,T;\;
L^2(\mathbb{R}^2)),\hspace*{1cm}E\sqrt{\r}\in L^2(0,T;\;
L^2(\mathbb{R}^2)),$$ and $$\sqrt{\r}\in L^2(0,T;\;
H^1(\mathbb{R}^2)).$$
\end{lem}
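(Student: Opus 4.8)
The statement to establish is Lemma 1.4: from the system \eqref{smax} — namely $\nabla_x\sqrt{\rho}-\tfrac12 E\sqrt{\rho}=H\in L^2_tL^2_x$, $\mathrm{div}_x E=\rho$, $E\in L^\infty_tL^2_x$, together with $\rho\ge 0$ and $\rho\in L^\infty_tL^1_x$ — we want $\rho\in L^2_tL^2_x$, $E\sqrt{\rho}\in L^2_tL^2_x$, and $\sqrt{\rho}\in L^2_tH^1_x$. The plan is to test the identity defining $H$ against a suitable multiple of $E\sqrt{\rho}$ (equivalently, to use the divergence structure) so that the term $\mathrm{div}_x E=\rho$ produces the $L^3$ (hence, after interpolation, $L^2$) control on $\rho$. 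Concretely, I would start from $\sqrt{\rho}\,\nabla_x\sqrt{\rho}=\tfrac12\nabla_x\rho$, multiply the first line of \eqref{smax} by $\sqrt{\rho}$ to get $\tfrac12\nabla_x\rho-\tfrac12 E\rho=H\sqrt{\rho}$, then take the $L^2_x$ inner product with $E$ and integrate by parts in the first term:
\begin{equation*}
-\tfrac12\int \rho\,\mathrm{div}_x E\,dx-\tfrac12\int \rho\,|E|^2\,dx=\int H\sqrt{\rho}\cdot E\,dx.
\end{equation*}
Using $\mathrm{div}_x E=\rho$ this reads $\tfrac12\int\rho^2\,dx+\tfrac12\int\rho|E|^2\,dx=-\int H\sqrt{\rho}\cdot E\,dx\le \|H\|_{L^2_x}\|\sqrt{\rho}\,E\|_{L^2_x}$, and the right-hand side is absorbed by Young's inequality into $\tfrac14\|\sqrt{\rho}\,E\|_{L^2_x}^2+\|H\|_{L^2_x}^2$. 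Integrating in time over $(0,T)$ then yields simultaneously $\rho\in L^2_tL^2_x$ and $E\sqrt{\rho}\in L^2_tL^2_x$, with norms bounded by $\|H\|_{L^2_tL^2_x}$.

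Once $E\sqrt{\rho}\in L^2_tL^2_x$ is known, the $H^1$ bound on $\sqrt{\rho}$ is immediate from the first line of \eqref{smax} itself: $\nabla_x\sqrt{\rho}=H+\tfrac12 E\sqrt{\rho}$ is a sum of two $L^2_tL^2_x$ functions, and $\sqrt{\rho}\in L^\infty_tL^2_x$ because $\rho\in L^\infty_tL^1_x$; hence $\sqrt{\rho}\in L^\infty_tL^2_x\cap L^2_t\dot H^1_x\subset L^2_tH^1_x$ on $(0,T)$. So the three conclusions follow in the order: $\rho\in L^2_tL^2_x$ and $E\sqrt{\rho}\in L^2_tL^2_x$ together, then $\sqrt{\rho}\in L^2_tH^1_x$.

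\textbf{Main obstacle.} The delicate point is the justification of the integration by parts $\int \nabla_x\rho\cdot E\,dx=-\int\rho\,\mathrm{div}_x E\,dx$ and of the pairing $\int H\sqrt{\rho}\cdot E\,dx$ at the level of regularity we \emph{a priori} have, since before the estimate is proved we only know $\rho\in L^\infty_tL^1_x$ and $E\in L^\infty_tL^2_x$, so $\rho E$ and $\rho^2$ are not yet known to be integrable. The standard remedy is to run the argument on a regularized/truncated version — e.g. replace $\rho$ by $\rho\wedge n$ or by $\rho\,\chi(\rho/n)$ with a smooth cutoff, establish the inequality with constants independent of $n$, and pass to the limit by monotone convergence (using $\rho\ge 0$) and Fatou — which is presumably how \cite{gh} proceeds; I would simply invoke that reference for the approximation details. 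A secondary technical point is that the inequality above controls the \emph{homogeneous} $\dot H^1$ norm of $\sqrt\rho$, and one upgrades to the full $H^1$ norm using $\|\sqrt\rho\|_{L^2_x}^2=\|\rho\|_{L^1_x}\le C$; no Poincaré-type inequality on $\mathbb{R}^2$ is needed since we keep the $L^2$ part separately.
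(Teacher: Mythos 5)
The paper itself gives no proof of this lemma (it simply defers to \cite{gh}), and your argument is exactly the standard one from that reference: multiplying $\nabla_x\sqrt{\rho}-\tfrac12 E\sqrt{\rho}=H$ by $\sqrt{\rho}$, pairing with $E$, and using $\mathrm{div}_x E=\rho\ge 0$ so that the cross term produces the good-signed quantities $\int\rho^2$ and $\int\rho|E|^2$, after which the $H$ term is absorbed by Cauchy--Schwarz and Young, and $\nabla\sqrt{\rho}=H+\tfrac12 E\sqrt{\rho}$ together with $\|\sqrt{\rho}\|_{L^2}^2=\|\rho\|_{L^1}$ gives the $L^2_tH^1_x$ bound. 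Your identification of the only delicate point (justifying the formal multiplication and integration by parts at the a priori regularity, handled by truncation and Fatou/monotone convergence, which works because the terms to be retained have a sign) is correct, so the proposal is sound and matches the intended proof.
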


We recall here the Littlewood-Paley decomposition of a function. We define $\mathcal{C}$ to be the ring of center $0$, of small radius $1/2$ and great radius $2$. There exist
two nonnegative radial functions $\chi$ and $\varphi$ belonging respectively to $\mathcal{D}(B(0, 1))$ and to $\mathcal{D}(\mathcal{C})$ so that $$\chi(\xi)+\sum_{q\geq0}\varphi(\xi)=0$$
$$|p-q|\geq2\,\Rightarrow\,\mbox{Supp }\varphi(2^{-q}.)\cap \mbox{Supp }\chi(2^{-p}.)=\emptyset.$$
For instance, one can take $\chi\in \mathcal{D}(B(0, 1))$ such that $\chi\equiv 1$ on $B(0, 1/2)$ and take,
$$\varphi(\xi)=\chi(\frac{\xi}{2})-\chi(\xi).$$
Then, we are able to define the Littlewood-Paley decomposition. Let us denote by $\mathcal{F}$ the
Fourier transform on $\mathbb{R}^{d}$. Let $h$, $\widetilde{h}$, $\triangle_{q}$, $S_{q}$ $(q \in \mathbb{Z})$ be defined as follows:
$$h=\mathcal{F}^{-1}\varphi \mbox{ and } \widetilde{h}=\mathcal{F}^{-1}\chi,$$
$$\triangle_{q}u=\mathcal{F}^{-1}(\varphi(2^{-q}\xi)\mathcal{F}u)=2^{qd}\int h(2^{q}y)u(x-y)dy,$$
$$S_{q}u=\mathcal{F}^{-1}(\chi(2^{-q}\xi)\mathcal{F}u)=2^{qd}\int \widetilde{h}(2^{q}y)u(x-y)dy.$$
We point out that $S_{q}u=\sum_{q'\leq q-1,\, q'\in \mathbb{Z}}\triangle_{q'}u$.
\begin{lem}\label{lem1}If $\rho \in  L^2(0,T;\;H^2(\mathbb{R}^2))$, then we have:
$$\|\rho\|_{L^{1}_{t}\,\,L^{\infty}_{x}}\leq C_{0}^{1/2}T^{1/2}+C T^{1/2}\|\nabla\rho\|_{L^{2}_{t,x}}\log(e+\frac{\|\nabla^{2}\rho\|_{L^{2}_{t,x}}}{\|\nabla\rho\|_{L^{2}_{t,x}}})$$
\end{lem}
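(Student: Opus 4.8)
The plan is to prove the logarithmic inequality
$$\|\rho\|_{L^1_t L^\infty_x}\leq C_0^{1/2}T^{1/2}+CT^{1/2}\|\nabla\rho\|_{L^2_{t,x}}\log\Big(e+\frac{\|\nabla^2\rho\|_{L^2_{t,x}}}{\|\nabla\rho\|_{L^2_{t,x}}}\Big)$$
by freezing time and working with the Littlewood-Paley decomposition in the space variable. For fixed $t$, write $\rho(t)=S_0\rho(t)+\sum_{q\geq 0}\triangle_q\rho(t)$, so that $\|\rho(t)\|_{L^\infty_x}\leq \|S_0\rho(t)\|_{L^\infty_x}+\sum_{q\geq 0}\|\triangle_q\rho(t)\|_{L^\infty_x}$. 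For the low-frequency piece I would use the energy identity: since $\frac12\partial_t\|\rho\|_{L^2}^2+\|\nabla\rho\|_{L^2}^2\leq 0$, we have $\|\rho(t)\|_{L^2_x}^2\leq \|\rho_0\|_{L^2}^2\leq C_0$, and then Bernstein's inequality gives $\|S_0\rho(t)\|_{L^\infty_x}\leq C\|S_0\rho(t)\|_{L^2_x}\leq C C_0^{1/2}$; integrating in $t$ on $[0,T]$ contributes the term $C_0^{1/2}T^{1/2}$ (after Cauchy-Schwarz in time, or even just $CC_0^{1/2}T$; one arranges the statement's form by splitting the time integral via Cauchy-Schwarz). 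For the high-frequency sum, the idea is the standard trick of splitting the dyadic sum at an integer $N$ to be optimized: Bernstein gives $\|\triangle_q\rho\|_{L^\infty_x}\leq C2^q\|\triangle_q\rho\|_{L^2_x}$ and also $\|\triangle_q\rho\|_{L^\infty_x}\leq C2^{-q}\|\nabla^2\triangle_q\rho\|_{L^2_x}$, so summing $q\leq N$ against the first bound and $q>N$ against the second, together with Cauchy-Schwarz in $q$ and the almost-orthogonality $\sum_q\|\triangle_q f\|_{L^2}^2\leq C\|f\|_{L^2}^2$, yields
$$\sum_{q\geq 0}\|\triangle_q\rho(t)\|_{L^\infty_x}\leq C\big(N^{1/2}\|\nabla\rho(t)\|_{L^2_x}+2^{-N}N^{1/2}\|\nabla^2\rho(t)\|_{L^2_x}\big).$$

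Then I would choose $N$ comparable to $\log_2\big(e+\|\nabla^2\rho(t)\|_{L^2_x}/\|\nabla\rho(t)\|_{L^2_x}\big)$ so that the second term is dominated by the first (in fact $\leq\|\nabla\rho(t)\|_{L^2_x}$), obtaining the pointwise-in-time bound
$$\|\rho(t)\|_{L^\infty_x}\leq CC_0^{1/2}+C\|\nabla\rho(t)\|_{L^2_x}\log^{1/2}\Big(e+\frac{\|\nabla^2\rho(t)\|_{L^2_x}}{\|\nabla\rho(t)\|_{L^2_x}}\Big).$$
Integrating in time over $[0,T]$ and applying the Cauchy-Schwarz inequality in $t$ to each term gives $\|\rho\|_{L^1_tL^\infty_x}\leq CC_0^{1/2}T+CT^{1/2}\big(\int_0^T\|\nabla\rho(t)\|_{L^2_x}^2\log(e+\cdots)\,dt\big)^{1/2}$. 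The remaining point is to pull the logarithm out of the time integral: since $s\mapsto s\log(e+A/s)$ is concave (and increasing) in $s$, Jensen's inequality applied to the measure $\|\nabla\rho(t)\|_{L^2_x}^2\,dt$ — or more simply the elementary bound $\log(e+a/b)\leq \log(e+\bar a/\bar b)$ after replacing the numerator and denominator by their time-integrated $L^2$ norms, which follows from monotonicity of the logarithm together with $\|\nabla\rho(t)\|_{L^2_x}^2\log(e+\|\nabla^2\rho(t)\|^2_{L^2_x}/\|\nabla\rho(t)\|^2_{L^2_x})$ being controlled via concavity by $\|\nabla\rho\|_{L^2_{t,x}}^2\log(e+\|\nabla^2\rho\|_{L^2_{t,x}}^2/\|\nabla\rho\|_{L^2_{t,x}}^2)$ — lets one replace the integrand's logarithm by the constant $\log(e+\|\nabla^2\rho\|_{L^2_{t,x}}/\|\nabla\rho\|_{L^2_{t,x}})$. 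Absorbing the factor of $2$ from $\log(e+x^2)\leq 2\log(e+x)$ into $C$ and bounding $T\leq CT^{1/2}(1+T^{1/2})$ in the low-frequency term finishes the estimate in the stated form.

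The main obstacle is the concavity/Jensen step that moves the logarithm outside the time integral: one must be careful that the function $g(u,v)=u\log(e+v/u)$ is jointly concave (it is, being a perspective-type function, since $t\mapsto t\log(e+1/t)$ is concave), and that the total masses $\int\|\nabla\rho\|_{L^2_x}^2$ and $\int\|\nabla^2\rho\|_{L^2_x}^2$ are exactly $\|\nabla\rho\|_{L^2_{t,x}}^2$ and $\|\nabla^2\rho\|_{L^2_{t,x}}^2$, so Jensen applies cleanly. Everything else — Bernstein inequalities on dyadic blocks, the optimization of the splitting index $N$, and Cauchy-Schwarz in time — is routine. A minor caveat is the degenerate case $\|\nabla\rho\|_{L^2_{t,x}}=0$, in which $\rho$ is (a.e. in $t$) constant in $x$ and hence, being in $L^2$, zero, so both sides vanish and the inequality is trivial; one treats this case separately at the outset.
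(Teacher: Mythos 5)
Your proof is correct, and the frequency-space part is the same as the paper's: Littlewood--Paley decomposition, Bernstein estimates on the dyadic blocks ($L^\infty$ controlled by $L^2$ of $\nabla\rho$ for low blocks and of $\nabla^2\rho$ with a factor $2^{-q}$ for high blocks), and optimization of the splitting index $N$. The genuine difference is where the optimization takes place. The paper keeps $N$ \emph{time-independent}: it first integrates the crude pointwise bound $\|\rho(t)\|_{L^\infty}\leq C\|\rho(t)\|_{L^2}+CN\|\nabla\rho(t)\|_{L^2}+C2^{-N}\|\nabla^2\rho(t)\|_{L^2}$ over $[0,T]$, applies Cauchy--Schwarz in time, and only then chooses the single integer $N\sim\log_2\bigl(e+\|\nabla^2\rho\|_{L^2_{t,x}}/\|\nabla\rho\|_{L^2_{t,x}}\bigr)$; this makes the logarithm appear directly with the space-time norms and no interchange of $\log$ and time integral is ever needed. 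You instead optimize $N=N(t)$ pointwise in $t$, which yields a sharper Brezis--Gallou\"et-type bound (with $\log^{1/2}$, thanks to your extra Cauchy--Schwarz in $q$), but then you must pull the logarithm out of the time integral; your concavity/Jensen argument for the perspective function $g(u,v)=u\log(e+v/u)$ (jointly concave and $1$-homogeneous, hence $\int g(u(t),v(t))\,dt\leq g(\int u,\int v)$) does close this step correctly, at the cost of being the most delicate point of your proof, and you rightly flag the degenerate case $\|\nabla\rho\|_{L^2}=0$ (note it should also be handled pointwise in $t$, where it is equally trivial since then $\rho(t)\equiv 0$). One shared cosmetic point: the low-frequency term naturally comes out as $C_0^{1/2}T$ rather than $C_0^{1/2}T^{1/2}$; the paper's proof has exactly the same slight imprecision, and it is harmless for the way the lemma is used.
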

\begin{proof}
For the proof of this lemma, we use the Littlewood-Paley decomposition of a function. When dealing with functions which depend on $t$ and $x$, the Littlewood-Paley decomposition will only apply to the $x$ variable.

We have $$\rho=S_{1}\rho+\sum_{q=1}^{q=N}\triangle_{q}\rho+\sum_{q\geq N+1}\triangle_{q}\rho$$
when $N$ is the integer. Hence, we have
\bea\label{sds} \|\rho\|_{L^{\infty}}&\leq&  \|S_{1}\rho\|_{L^{\infty}}+\displaystyle\sum_{q=1}^{q=N}\|\triangle_{q}\rho\|_{L^{\infty}}+\sum_{q\geq N}\|\triangle_{q}\rho\|_{L^{\infty}}\nonumber\\
\,&\leq&  \|S_{1}\rho\|_{L^{\infty}}+\displaystyle\sum_{q=1}^{q=N}2^{-q}\|\nabla(\triangle_{q}\rho)\|_{L^{\infty}}+\sum_{q\geq N}2^{-2q}\|\nabla^{2}(\triangle_{q}\rho)\|_{L^{\infty}}\nonumber\\
\,&\leq& C\|\rho\|_{L^{2}}+C\displaystyle\sum_{q=1}^{q=N}\|\triangle_{q}(\nabla\rho)\|_{L^{2}}+C\sum_{q\geq N}2^{-q}\|\triangle_{q}(\nabla^{2}\rho)\|_{L^{2}}\nonumber\\
\,&\leq& C\|\rho\|_{L^{2}}+C\displaystyle\sum_{q=1}^{q=N}\|\nabla\rho\|_{L^{2}}+C\sum_{q\geq N}2^{-q}\|\nabla^{2}\rho\|_{L^{2}}\nonumber\\
&\leq& C\|\rho\|_{L^{2}}+CN\|\nabla\rho\|_{L^{2}}+C2^{-N}\|\nabla^{2}\rho\|_{L^{2}}\eea

Then, by using the Cauchy-Schwartz inequality, we get \\
\bea \displaystyle\int_{0}^{T}\|\rho\|_{L^{\infty}}dt&\leq&  C\|\rho\|_{L^{1}L^{2}}+CN\ds\int_{0}^{T}\|\nabla\rho\|_{L^{2}}dt+C2^{-N}\ds\int_{0}^{T}\|\nabla^{2}\rho\|_{L^{2}}dt\\
\,&\leq& C_{0}^{1/2}T^{1/2}+C\sqrt{T}N\|\nabla\rho\|_{L^{2}_{t,x}}+C\sqrt{T}2^{-N}\|\nabla^{2}\rho\|_{L^{2}_{t,x}}\eea
we optimize in $N$, by taking $N$ of the order $\frac{1}{\log(2)}\log(e+\frac{\|\nabla^{2}\rho\|_{L^{2}_{t,x}}}{\|\nabla\rho\|_{L^{2}_{t,x}}})$. Hence,
\bea\|\rho\|_{L^{1}_{t}\,\,L^{\infty}_{x}}\leq C_{0}^{1/2}T^{1/2}+C T^{1/2} \|\nabla\rho\|_{L^{2}_{t,x}}\log(e+\frac{\|\nabla^{2}\rho\|_{L^{2}_{t,x}}}{\|\nabla\rho\|_{L^{2}_{t,x}}})\eea
\end{proof}
\begin{lem}\label{rl1}
There exists a constant $C$ and $C_{0}$, such that for all $T>0$, we have
\begin{equation}\label{sx1}
\|\rho\|_{L^{1}H^{1}}\leq C (e+\|(E_{0}, B_{0})\|_{H^{1}})e^{C_{0}T}.
\end{equation}
\end{lem}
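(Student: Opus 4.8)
The plan is to read the estimate off the energy identity (\ref{e}). Integrating (\ref{e}) over $(0,t)$ gives, for every $t>0$,
\[
\|\rho(t)\|_{L^2}^2+\|E(t)\|_{L^2}^2+\|B(t)\|_{L^2}^2+2\int_0^t\|\nabla\rho(s)\|_{L^2}^2\,ds\ \le\ \|\rho_0\|_{L^2}^2+\|E_0\|_{L^2}^2+\|B_0\|_{L^2}^2 ,
\]
so that, for a suitable choice of the absolute constant in $C_0=C[\|\rho_0\|_{L^2}^2+\|B_0\|_{L^2}^2+\|E_0\|_{L^2}^2+1]$ (we may clearly take it $\ge 1$), one obtains at once $\|\rho\|_{L^\infty(0,T;L^2)}\le C_0^{1/2}$ and $\|\nabla\rho\|_{L^2(0,T;L^2)}\le C_0^{1/2}$. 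Note this uses only the $L^2$ norms of the data, which is consistent with the fact that $C_0$ does not involve $\|\rho_0\|_{H^1}$.

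Granting these two bounds, (\ref{sx1}) is a one-line Cauchy--Schwarz estimate: from $\|\rho\|_{H^1}\le\|\rho\|_{L^2}+\|\nabla\rho\|_{L^2}$ and integration in time,
\[
\|\rho\|_{L^1(0,T;H^1)}\ \le\ \int_0^T\|\rho\|_{L^2}\,dt+\int_0^T\|\nabla\rho\|_{L^2}\,dt\ \le\ T\,\|\rho\|_{L^\infty_tL^2}+T^{1/2}\,\|\nabla\rho\|_{L^2_{t,x}}\ \le\ C_0^{1/2}\bigl(T+T^{1/2}\bigr).
\]
It then only remains to absorb this polynomial growth into the exponential form stated: since $C_0\ge 1$ we have $C_0^{1/2}T\le C_0T\le e^{C_0T}$, and since $e^{x}\ge 1+x\ge\sqrt{x}$ we have $C_0^{1/2}T^{1/2}=\sqrt{C_0T}\le e^{C_0T}$; hence $\|\rho\|_{L^1(0,T;H^1)}\le 2e^{C_0T}\le C(e+\|(E_0,B_0)\|_{H^1})e^{C_0T}$, which is (\ref{sx1}).

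There is essentially no genuine obstacle here; the one point that needs care is that the energy identity (\ref{e}) — hence the two a priori bounds above — relies on the sign condition $\rho\ge 0$ (it is exactly what makes $I_1+I_2=-\tfrac12\int\rho^3\le 0$ and $I_3=-\int\rho|E|^2\le 0$ in the derivation of (\ref{e})). So I would first record that nonnegativity is propagated: rewriting the continuity equation of (\ref{s1}) as $\partial_t\rho-\Delta\rho+E\cdot\nabla\rho+\rho^2=0$ and invoking the parabolic maximum principle (or propagating $\rho\ge 0$ on the Galerkin scheme of Section~3, where (\ref{e}) comes from direct testing) gives $\rho\ge 0$, after which the argument above applies verbatim. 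I would also remark that the exponential in (\ref{sx1}) is far from sharp for $\|\rho\|_{L^1H^1}$ alone — a bound polynomial in $T$ holds — but that this is the form convenient to combine, together with the Maxwell regularity estimate of \cite{Mas} recalled above and the logarithmic estimate of Lemma~\ref{lem1}, in the Gr\"onwall argument of Section~2 that controls $\|(E,B)(t)\|_{H^1}$ and produces the genuine exponential growth (\ref{grow}).
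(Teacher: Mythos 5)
Your proof is correct and follows essentially the same route as the paper: the energy estimate gives $\|\rho\|_{L^\infty(0,T;L^2)}\le C_0^{1/2}$ and $\|\nabla\rho\|_{L^2(0,T;L^2)}\le C_0^{1/2}$, from which H\"older in time yields $\|\rho\|_{L^1(0,T;H^1)}\le C_0^{1/2}(T+T^{1/2})$, absorbed into $Ce^{C_0T}$ exactly as in the paper (which merely passes through $\|\rho\|_{L^2H^1}$ first). Your added remark that the energy inequality uses the sign $\rho\ge 0$ is a fair observation but not part of this lemma's proof in the paper.
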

\begin{proof}
We have
\bea \|\rho\|_{L^{2}H^{1}}&\leq&   T^{1/2}\|\rho\|_{L^{\infty}L^{2}}+\|\nabla\rho\|_{L^{2}L^{2}}\\
\,&\leq& C_{0}^{1/2}(T^{1/2}+1)\eea
So, we get
\bea \|\rho\|_{L^{1}H^{1}}&\leq&   T^{1/2}\|\rho\|_{L^{2}H^{1}}\\
\,&\leq&   C_{0}^{1/2}T^{1/2}(T^{1/2}+1)\\
\;&\leq&C e^{C_{0}T}.\eea
\end{proof}
Finally, we recall a 2D Gagliardo-Nirenberg estimate and some classical inequalities.
\begin{lem} For any $u\in H^{1}(\mathbb{R}^{2}))$, we have
\begin{equation}
\label{GN}
\|u\|_{L^{4}}\leq \|u\|^{\frac{1}{2}}_{L^{2}}\|\nabla u\|^{\frac{1}{2}}_{L^{2}}.
\end{equation}
\end{lem}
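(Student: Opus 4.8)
The plan is to prove the inequality first for functions in $C_c^\infty(\RR^2)$ and then pass to the limit in $H^1(\RR^2)$ by density. The heart of the matter is the classical one-dimensional slicing trick. Fix $u\in C_c^\infty(\RR^2)$. For each fixed $x_2$ the map $x_1\mapsto u(x_1,x_2)$ is smooth and compactly supported, so the fundamental theorem of calculus gives both
$$u(x_1,x_2)^2=\int_{-\infty}^{x_1}2u\,\partial_1 u\,dy_1\qquad\text{and}\qquad u(x_1,x_2)^2=-\int_{x_1}^{+\infty}2u\,\partial_1 u\,dy_1,$$
whence, adding and dividing by two,
$$u(x_1,x_2)^2\le\int_{\RR}|u(y_1,x_2)|\,|\partial_1 u(y_1,x_2)|\,dy_1=:f(x_2).$$
Symmetrically, $u(x_1,x_2)^2\le\ds\int_{\RR}|u(x_1,y_2)|\,|\partial_2 u(x_1,y_2)|\,dy_2=:g(x_1)$. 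The crucial feature is that $f$ depends only on $x_2$ and $g$ only on $x_1$.

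Next I would multiply these two pointwise bounds and integrate over $\RR^2$. Since $u^4\le f(x_2)g(x_1)$ and the variables separate, Tonelli's theorem (the integrand is nonnegative) yields
$$\|u\|_{L^4}^4\le\Big(\int_{\RR}f(x_2)\,dx_2\Big)\Big(\int_{\RR}g(x_1)\,dx_1\Big)=\Big(\int_{\RR^2}|u||\partial_1 u|\,dx\Big)\Big(\int_{\RR^2}|u||\partial_2 u|\,dx\Big).$$
Applying the Cauchy--Schwarz inequality to each factor gives $\|u\|_{L^4}^4\le\|u\|_{L^2}^2\,\|\partial_1 u\|_{L^2}\,\|\partial_2 u\|_{L^2}$, and the elementary bound $\|\partial_1 u\|_{L^2}\|\partial_2 u\|_{L^2}\le\tfrac12(\|\partial_1 u\|_{L^2}^2+\|\partial_2 u\|_{L^2}^2)=\tfrac12\|\nabla u\|_{L^2}^2$ then produces $\|u\|_{L^4}^4\le\tfrac12\|u\|_{L^2}^2\|\nabla u\|_{L^2}^2$, which is even a touch stronger than the claimed $\|u\|_{L^4}\le\|u\|_{L^2}^{1/2}\|\nabla u\|_{L^2}^{1/2}$.

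Finally, for a general $u\in H^1(\RR^2)$ I would choose $u_n\in C_c^\infty(\RR^2)$ with $u_n\to u$ in $H^1$. Applying the inequality just established to the differences $u_n-u_m$ shows that $(u_n)$ is Cauchy in $L^4$, hence converges there; comparing with the $L^2$ limit identifies the $L^4$ limit as $u$. Passing to the limit in $\|u_n\|_{L^4}\le\|u_n\|_{L^2}^{1/2}\|\nabla u_n\|_{L^2}^{1/2}$ gives the conclusion. There is no genuine obstacle here: the only steps warranting a word of care are the one-dimensional fundamental-theorem-of-calculus identities (legitimate because each slice is smooth with compact support) and the use of Tonelli to factor the double integral (legitimate because the integrand is nonnegative).
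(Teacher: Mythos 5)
Your argument is correct: it is the classical Ladyzhenskaya slicing proof (fundamental theorem of calculus on each line, separation of variables via Tonelli, Cauchy--Schwarz, then density), and every step is justified as you present it; in fact it yields the slightly stronger bound $\|u\|_{L^4}\leq 2^{-1/4}\|u\|_{L^2}^{1/2}\|\nabla u\|_{L^2}^{1/2}$, which in particular implies the stated inequality with constant $1$. The paper itself offers no proof of this lemma --- it is merely recalled as a classical 2D Gagliardo--Nirenberg estimate --- so there is no authorial argument to compare against; your self-contained derivation is exactly the standard one that would be cited.
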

\begin{lem}
Let $a\geq 1$ and $b\geq 0$ be two real numbers. Then
\begin{equation}
\label{ineq1}
\sqrt{a+b}\leq \sqrt{a}+b,
\end{equation}
and
\begin{equation}
\label{ineq2}
\log(a+b)\leq \log(a)+b.
\end{equation}
\end{lem}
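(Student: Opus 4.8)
The plan is to prove the two inequalities separately, each by a short rationalization or concavity argument, and to flag precisely where the hypotheses $a\geq 1$ and $b\geq 0$ are used.

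For \eqref{ineq1}, I would rationalize the difference of square roots:
\[
\sqrt{a+b}-\sqrt{a}=\frac{(a+b)-a}{\sqrt{a+b}+\sqrt{a}}=\frac{b}{\sqrt{a+b}+\sqrt{a}}.
\]
Since $b\geq 0$ we have $\sqrt{a+b}+\sqrt{a}\geq \sqrt{a}\geq 1$, the last step using $a\geq 1$; hence $\sqrt{a+b}-\sqrt{a}\leq b$, which is \eqref{ineq1}. (Equivalently, squaring \eqref{ineq1} reduces it to $b\leq 2b\sqrt{a}+b^{2}$, trivial when $b=0$ and, for $b>0$, immediate after dividing by $b$ since $2\sqrt{a}\geq 2\geq 1$.)

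For \eqref{ineq2}, I would invoke the elementary bound $\log(1+x)\leq x$ valid for all $x\geq 0$ (a consequence of $e^{x}\geq 1+x$, or of the concavity of $\log$). Writing
\[
\log(a+b)-\log(a)=\log\Bigl(1+\frac{b}{a}\Bigr)\leq \frac{b}{a}\leq b,
\]
where the final inequality uses $b\geq 0$ together with $a\geq 1$, gives \eqref{ineq2}.

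There is no substantive obstacle here; the only point worth emphasizing is that both estimates genuinely need $a\geq 1$ — it is what lets one replace $1/\sqrt{a}$, respectively $1/a$, by $1$ — and that $b\geq 0$ is required both for the sign manipulations and in order to apply $\log(1+x)\leq x$.
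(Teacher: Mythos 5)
Your proof is correct and complete: the rationalization argument for \eqref{ineq1} and the bound $\log(1+x)\leq x$ for \eqref{ineq2} are exactly the standard verifications, and you correctly track where $a\geq 1$ and $b\geq 0$ enter. The paper itself states this lemma without proof (as a recollection of classical inequalities), so there is nothing to compare against; your argument simply supplies the omitted elementary details. One small caveat: your closing remark that both estimates ``genuinely need'' $a\geq 1$ should be read as a statement about where the hypothesis is used in your argument, not about sharpness --- for instance \eqref{ineq1} already holds whenever $2\sqrt{a}+b\geq 1$, so $a\geq \tfrac14$ would do --- but this does not affect the validity of the proof.
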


\section{ A priori estimates}


The system (\ref{s1}) has the follwing energy identity
$$\frac{1}{2}\partial_{t}[\|\rho\|^{2}_{L^{2}}+\|E\|^{2}_{L^{2}}+\|B\|^{2}_{L^{2}}]+\|\nabla\rho\|^{2}_{L^{2}}\leq0$$
and hence
 \begin{eqnarray}\label{ident}\frac{1}{2}[\|\rho\|^{2}_{L^{2}}+\|E\|^{2}_{L^{2}}+\|B\|^{2}_{L^{2}}](t)
 +\int_{0}^{t}\|\nabla\rho\|^{2}_{L^{2}}\leq\\\nonumber
\frac{1}{2}[\|\rho_{0}\|^{2}_{L^{2}}+\|E_{0}\|^{2}_{L^{2}}+\|B_{0}\|^{2}_{L^{2}}]\leq C_{0}. \end{eqnarray}
This formally yields the bounds
$$\rho\in L^\infty(0,T;\; L^2)\cap L^2(0,T;\;
H^1),\,E,\,B\in L^\infty(0,T;\; L^2).$$
Here and below $C_{0}$ will denote any constant of the form $C[\|\rho_{0}\|^{2}_{L^{2}}+\|E_{0}\|^{2}_{L^{2}}+\|B_{0}\|^{2}_{L^{2}}+1]$
where $C$ may change from one line to the other.

Moreover, the first equation of (\ref{s1}) can be written as 
$$\partial_{t}\rho-\triangle\rho=-E.\nabla\rho-\rho^{2}.$$
Applying the operator $\nabla$ to this equation
$$\partial_{t}\nabla\rho-\triangle\nabla\rho=-\nabla E.\nabla\rho-E.\nabla^{2}\rho-2\rho\nabla\rho.$$
Multiplying the result by $\nabla\rho$, we obtain
$$\partial_{t}\frac{|\nabla\rho|^{2}}{2}-\nabla\rho\triangle\nabla\rho=-(\nabla E.\nabla\rho)\nabla\rho-(E.\nabla\nabla\rho)\nabla\rho-2\rho|\nabla\rho|^{2}$$
Now, integrating it with respect to $x$
\begin{equation}\label{swx}\frac{1}{2}\partial_{t}\int|\nabla\rho|^{2}dx+\int|\nabla^{2}\rho|^{2}dx=-\int(\nabla E.\nabla\rho)\nabla\rho dx\end{equation} $$-\int(E.\nabla^{2}\rho)\nabla\rho dx-2\int\rho|\nabla\rho|^{2}dx.$$
Applying the operator $\nabla$ to the second equation of (\ref{s1})
$$\partial_{t}\nabla E-curl_{x}\nabla B=\nabla^{2}\rho-\nabla\rho E-\rho\nabla E.$$
Multiplying the result by $\nabla E$ and integrating it with respect to $x$

\begin{equation}\label{sw}\frac{1}{2}\partial_{t}\int|\nabla E|^{2}dx-\int\nabla E curl_{x}\nabla Bdx=\int\nabla^{2} \rho\nabla Edx\end{equation} $$-\int \nabla\rho E\nabla E dx-\int\rho|\nabla E|^{2}dx.$$
 Applying the operator $\nabla$ to the third equation of (\ref{s1})
$$\partial_{t}\nabla B+curl_{x}\nabla E=0$$
Multiplying the result by $\nabla B$ and integrating it with respect to $x$

\begin{equation}\label{sww}\frac{1}{2}\partial_{t}\int|\nabla B|^{2}dx+\int\nabla B\,\, curl_{x}\nabla E\,dx=0.\end{equation}
 Combining (\ref{swx}),(\ref{sw}) and (\ref{sww}), we deduce that

$$\frac{1}{2}\partial_{t}\underbrace{\int(|\nabla\rho|^{2}+|\nabla E|^{2}+|\nabla B|^{2})dx}_{\|\nabla F\|^{2}_{L^{2}}}+\int|\nabla^{2}\rho|^{2}dx=\underbrace{\int\nabla^{2} \rho\nabla Edx}_{I_{1}}$$$$\underbrace{-\int \nabla\rho E\nabla E dx}_{I_{2}}\underbrace{-\int\rho|\nabla E|^{2}dx}_{I_{3}}\underbrace{-\int(\nabla E.\nabla\rho)\nabla\rho dx}_{I_{4}}$$$$\underbrace{-\int(E.\nabla^{2}\rho)\nabla\rho dx}_{I_{5}}\underbrace{-2\int\rho|\nabla\rho|^{2}dx}_{I_{6}}.$$
 Now, using the Young's inequality $(a b\leq\alpha a^{2}+\frac{1}{4\alpha}b^{2}, \forall \alpha>0)$, we can see that
\begin{eqnarray*}I_{1}=\int\nabla^{2}\rho\nabla Edx&\leq&\|\nabla E\|_{L^{2}}\|\nabla^{ 2 }\rho\|_{L^{2}}\\
\,&\leq&2\|\nabla F\|^{2}_{L^{2}}+\frac{1}{8}\|\nabla^{ 2 }\rho\|^{2}_{L^{2}}\end{eqnarray*}
The Gagliardo-Nirenberg inequality \eqref{GN}, gives
\begin{eqnarray*}I_{2}=-\int \nabla\rho E\nabla E dx&=&-\int \nabla\rho \nabla \frac{E^{2}}{2} dx\\&=&\int \nabla^{2}\rho \frac{E^{2}}{2} dx\\\,&\leq&\frac{1}{2}\|\nabla^{2} \rho\|_{L^{2}}\| E\|_{L^{4}}^{2}\\\,&\leq&\frac{1}{2}\|\nabla^{2}\rho\|^{2}_{L^{2}}\|E\|_{L^{2}}\|\nabla E\|_{L^{2}}\\
\,&\leq&\frac{1}{8}\|\nabla^{2}\rho\|^{2}_{L^{2}}+\frac{1}{2}\|E\|^{2}_{L^{2}}\|\nabla F\|^{2}_{L^{2}}\end{eqnarray*}
\begin{eqnarray*}I_{4}=-\int(\nabla E.\nabla\rho)\nabla\rho dx&\leq&\|\nabla E\|_{L^{2}}\|\nabla \rho\|_{L^{4}}^{2}\\\;&\leq&\|\nabla E\|_{L^{2}}\|\nabla \rho\|_{L^{2}}\|\nabla^{2} \rho\|_{L^{2}}\\\;&\leq&\frac{1}{8}\|\nabla^{2} \rho\|^{2}_{L^{2}}+2\|\nabla \rho\|^{2}_{L^{2}}\|\nabla F\|^{2}_{L^{2}}\end{eqnarray*}
\begin{eqnarray*}I_{5}=-\int(E.\nabla\nabla\rho)\nabla\rho dx&=&-\int E.\nabla(\frac{|\nabla\rho|^{2}}{2}) dx\\\;&=&\int\rho\frac{|\nabla\rho|^{2}}{2} dx\\\;&\leq&\frac{1}{2}\|\rho\|_{L^{2}}\| \nabla\rho\|_{L^{4}}^{2}\\\;&\leq&\frac{1}{2}\|\rho\|_{L^{2}}\|\nabla \rho\|_{L^{2}}\|\nabla^{2} \rho\|_{L^{2}}\\\;&\leq& \frac{1}{8}\|\nabla^{2} \rho\|^{2}_{L^{2}}+\frac{1}{2}\| \rho\|^{2}_{L^{2}}\|\nabla\rho\|^{2}_{L^{2}}\end{eqnarray*}
Combining $I_{1}, I_{2}, I_{3}, I_{4}, I_{5}$ and $I_{6}$, we deduce that
\begin{eqnarray*}\frac{1}{2}\partial_{t}\|\nabla F\|^{2}_{L^{2}}+\|\nabla^{2}\rho\|^{2}_{L^{2}}&\leq&\frac{1}{2}\|\nabla^{2}\rho\|^{2}_{L^{2}}+\frac{1}{2}\| \rho\|^{2}_{L^{2}}\|\nabla\rho\|^{2}_{L^{2}}\\\;&\;&+\Big[2+\frac{1}{2}\|E\|^{2}_{L^{2}}+2\|\nabla\rho\|^{2}_{L^{2}}\Big]\|\nabla F\|^{2}_{L^{2}}\end{eqnarray*}
 Then,
\begin{eqnarray*}\partial_{t}\|\nabla F\|^{2}_{L^{2}}+\|\nabla^{2}\rho\|^{2}_{L^{2}}&\leq&\| \rho\|^{2}_{L^{2}}\|\nabla\rho\|^{2}_{L^{2}}\\\;&\;&+\Big[4+\|E\|^{2}_{L^{2}}+4\|\nabla\rho\|^{2}_{L^{2}}\Big]\|\nabla F\|^{2}_{L^{2}}\end{eqnarray*}
 Hence,
\begin{eqnarray}\label{eq}\|\nabla F\|^{2}_{L^{2}}(t)+\int_{0}^{t}\|\nabla^{2}\rho\|^{2}_{L^{2}} &\leq&\end{eqnarray}$$\|\nabla F_{0}\|^{2}_{L^{2}}+\|\rho\|^{2}_{L^{2}}\int_{0}^{t}\|\nabla\rho\|^{2}_{L^{2}} +\int_{0}^{t}\Big(\Big[4+\|E\|^{2}_{L^{2}}+4\|\nabla\rho\|^{2}_{L^{2}}\Big]\|\nabla F\|^{2}_{L^{2}}\Big).$$
So, we have
\begin{eqnarray*}\|\nabla F\|^{2}_{L^{2}}(t)&\leq&\|\nabla F_{0}\|^{2}_{L^{2}}+\|\rho\|^{2}_{L^{2}}\int_{0}^{t}\|\nabla\rho\|^{2}_{L^{2}}+\int_{0}^{t}\Big(\Big[4+\|E\|^{2}_{L^{2}}+4\|\nabla\rho\|^{2}_{L^{2}}\Big]\|\nabla F\|^{2}_{L^{2}}\Big).\end{eqnarray*}
We deduce from Gronwall lemma that
$$\|\nabla F\|^{2}_{L^{2}}(t)\leq\Big(\|\nabla F_{0}\|^{2}_{L^{2}}+\|\rho\|^{2}_{L^{2}}\int_{0}^{t}
\|\nabla\rho\|^{2}_{L^{2}}\Big)e^{\ds\int_{0}^{t}\Big(4+\|E\|^{2}_{L^{2}}+4\|\nabla\rho\|^{2}_{L^{2}}\Big)}$$
for $0<t<T$.
This, formally, yields the bound $\nabla F \in L^\infty(0,T;\; L^2(\mathbb{R}^{2}))$

Moreover, if we return to (\ref{eq}) we get
\begin{eqnarray*}\int_{0}^{t}\|\nabla^{2}\rho\|^{2}_{L^{2}}&\leq&\|\nabla F_{0}\|^{2}_{L^{2}}+\|\rho\|^{2}_{L^{2}}\int_{0}^{t}\|\nabla\rho\|^{2}_{L^{2}}\\\,&\,&+\int_{0}^{t}\Big(\Big[4+\|E\|^{2}_{L^{2}}+4\|\nabla\rho\|^{2}_{L^{2}}\Big]\|\nabla F\|^{2}_{L^{2}}\Big)\end{eqnarray*}
We obtain that $\nabla^{2}\rho\in L^2(0,T;\; L^2)$. Then, we deduce that
$$\nabla\rho\in L^\infty(0,T;\; L^2)\cap L^2(0,T;\; H^1)\mbox{ and }\,\nabla E,\,\nabla B\in L^\infty(0,T;\; L^2).$$
Now, by using the previous section, we have
$$\rho\in L^\infty(0,T;\; H^1)\cap L^2(0,T;\; H^2)\mbox{ and }\, E,\, B\in L^\infty(0,T;\; H^1).$$
If we denote $F= (E,B)$ then we get from Lemma 1.3 that for $t>0$:
$$\|F(t)\|_{H^{1}}\leq\|F_{0}\|_{H^{1}}+\int_{0}^{t}\|j(s)\|_{H^{1}}ds$$
Using that
$$j=\rho E-\nabla\rho\mbox{ and }\nabla j=\nabla\rho.E+\rho\nabla E-\nabla^{2}\rho,$$
 we obtain
 \begin{eqnarray*}\|F(t)\|_{H^{1}}&\leq&\|F_{0}\|_{H^{1}}+Z_{1}(t)+Z_{2}(t)+Z_{3}(t)+Z_{4}(t) .\end{eqnarray*}
Where $$Z_{1}(t)=\|\rho E-\nabla\rho\|_{L^{1}L^{2}},\, Z_{2}(t)=\|\nabla\rho.E\|_{L^{1}L^{2}},$$
 $$Z_{3}(t)=\|\rho\nabla E\|_{L^{1}L^{2}} \mbox{ and } Z_{4}(t)=\|\nabla^{2}\rho\|_{L^{1}L^{2}}.$$
We have from Cauchy-Schwartz inequality that
 \begin{eqnarray*}
Z_{1}(t)&\leq&\int_{0}^{t}\|\rho(s) \|_{L^{\infty}}\|F(s)\|_{H^{1}}ds+\int_{0}^{t}\|\nabla\rho(s) \|_{L^{2}}ds\\
\,&\leq&\int_{0}^{t}\|\rho(s) \|_{L^{\infty}}\|F(s)\|_{H^{1}}ds+t^{\frac{1}{2}}(\int_{0}^{t}\|\nabla\rho(s) \|_{L^{2}}^{2}ds)^{\frac{1}{2}}\\
\,&\leq&\int_{0}^{t}\|\rho(s) \|_{L^{\infty}}\|F(s)\|_{H^{1}}ds+t^{\frac{1}{2}}C_{0}^{\frac{1}{2}}.
\end{eqnarray*}
and we have from Gagliardo-Nirenberg inequality \eqref{GN} that
 \begin{eqnarray*}
Z_{2}(t)&\leq&\int_{0}^{t}\|\nabla\rho(s) \|_{L^{4}}\|E(s)\|_{L^{4}}ds\\
\,&\leq& \int_{0}^{t}\|\nabla\rho(s) \|^{\frac{1}{2}}_{L^{2}}\|\nabla^{2}\rho(s) \|^{\frac{1}{2}}_{L^{2}}\|E(s)\|^{\frac{1}{2}}_{L^{2}}\|\nabla E(s)\|^{\frac{1}{2}}_{L^{2}}ds\\
\,&\leq& \int_{0}^{t}\|\nabla^{2}\rho(s) \|_{L^{2}}ds+\int_{0}^{t}\|\nabla\rho(s) \|_{L^{2}}\|E(s)\|_{L^{2}}\|\nabla E(s)\|_{L^{2}}ds\\
\,&\leq& \int_{0}^{t}\|\nabla^{2}\rho(s) \|_{L^{2}}ds+\|E\|_{L^{\infty}(L^{2})}\int_{0}^{t}\|\nabla\rho(s) \|_{L^{2}}\|F(s)\|_{H^{1}}ds\\
\,&\leq& \int_{0}^{t}\|\nabla^{2}\rho(s) \|_{L^{2}}ds+C_{0}\int_{0}^{t}\|\nabla\rho(s) \|_{L^{2}}\|F(s)\|_{H^{1}}ds.
\end{eqnarray*}
Moreover,
 \begin{eqnarray*}
Z_{3}(t)&\leq&\int_{0}^{t}\|\rho(s) \|_{L^{\infty}}\|F(s)\|_{H^{1}}ds.
\end{eqnarray*}
Combining $Z_{1}, Z_{2}, Z_{3}$ and $Z_{4}$, we get
 \begin{eqnarray*}
\|F(t)\|_{H^{1}}&\leq&\|F_{0}\|_{H^{1}}+t^{\frac{1}{2}}C_{0}^{\frac{1}{2}}+2\int_{0}^{t}\|\rho(s) \|_{L^{\infty}}\|F(s)\|_{H^{1}}ds\\
\,&\,&+2\int_{0}^{t}\|\nabla^{2}\rho(s) \|_{L^{2}}ds+C_{0}\int_{0}^{t}\|\nabla\rho(s) \|_{L^{2}}\|F(s)\|_{H^{1}}ds\\
\,&\leq&\|F_{0}\|_{H^{1}}+C_{0}t^{\frac{1}{2}}\\
\,&\,&+\int_{0}^{t}(2\|\rho(s) \|_{L^{\infty}}+C_{0}\|\nabla\rho(s)\|_{L^{2}})\|F(s)\|_{H^{1}}ds.
\end{eqnarray*}
We deduce from Gronwall lemma that
 \begin{eqnarray*}
\|F(t)\|_{H^{1}}&\leq&\Big(\|F_{0}\|_{H^{1}}+C_{0}t^{\frac{1}{2}}\Big)e^{\ds\int_{0}^{t}(2\|\rho(s) \|_{L^{\infty}}+C_{0}\|\nabla\rho(s)\|_{L^{2}})ds}.
\end{eqnarray*}
Then, using the inequality $( \log(e+ae^{b})\leq  \log(e+a)+b, a\geq0, b\geq0)$, we obtain
 \begin{eqnarray*}
 \log(e+\|F(t)\|_{H^{1}}) \leq \log(e+\|F_{0}\|_{H^{1}}+C_{0}t^{\frac{1}{2}})+C_{0}^{\frac{1}{2}}t^{\frac{1}{2}}+C\|\rho \|_{L^{1}L^{\infty}}.
\end{eqnarray*}
By using inequality \eqref{ineq2} and Lemma 1.6, we obtain
 \begin{eqnarray*}
 \log(e+\|F(t)\|_{H^{1}}) \leq \log(e+\|F_{0}\|_{H^{1}})+C_{0}^{\frac{1}{2}}t^{\frac{1}{2}}\\+C t^{1/2}\|\nabla\rho\|_{L^{2}_{t,x}}\log(e+\frac{\|\nabla^{2}\rho\|_{L^{2}_{t,x}}}{\|\nabla\rho\|_{L^{2}_{t,x}}}).
\end{eqnarray*}
Then, we use that the function $x\rightarrow x\log(e+\frac{C}{x})$ is increasing in $x$ to deduce that there exists a $C_{0}$ such that for all $T>0$, we have
\begin{eqnarray*}
 \sup_{0\leq t\leq T}\log^{\frac{1}{2}}(e+\|F(t)\|_{H^{1}}) \leq \log^{\frac{1}{2}}(e+\|F_{0}\|_{H^{1}})+C_{0}^{\frac{1}{2}}T^{\frac{1}{2}}.
\end{eqnarray*}
Therefore, there exists a constant $C$ such that for all $T>0$, we have
\begin{eqnarray*}
\log(e+\|F(T)\|_{L^{\infty}H^{1}}) \leq C\log(e+\|F_{0}\|_{H^{1}})+C_{0}T.
\end{eqnarray*}
Then, there exists a constant $D_{0}$ depending on $\|F_{0}\|_{H^{1}}$ such that for all $T>0$, we have
 \begin{eqnarray*}
\|F(T)\|_{H^{1}} &\leq&D_{0}(e+\|F_{0}\|_{H^{1}})e^{C_{0}T}.
\end{eqnarray*}

\section{Proof of Theorem 1.1}

In this section, we prove the existence and uniqueness of Theorem 1.1.
\subsection{Existence of solutions}
The existence of a solution $(\rho, E, B)$ which solves (\ref{s1}) follows from the a priori estimates proved in the last section. We shall use the very classical Friedrich’s method (also called Galerkin method in the periodic case) which consists in approximating the system (\ref{s1}) by a cutoff in the frequency system. For this, let us define the regularization operator $J_{n}$ by:
\begin{eqnarray}\label{jn}\forall\, n\in\mathbb{N},\quad   J_{n}u:=\mathcal{F}^{-1}(\mathbf{1}_{B(0,n)}(\xi)\widehat{u}(\xi)),\end{eqnarray}
where the Fourier transform $\mathcal{F}$ in the space variables defined by $$\mathcal{F}(u)(\xi):=\hat{u}(\xi):=\int_{\mathbb{R}^{d}}e^{-ix.\,\xi}u(x)dx.$$

This operator has a regularizing effect since the Plancherel equality allows to write that, for all $s\geq0:$
\begin{eqnarray}\label{hjn}
\exists C>0,\;\forall\, n\in\mathbb{N},\quad  \|J_{n}u\|_{H^{s}(\mathbb{R}^{d})}\leq C (1+n)^{s}  \|u\|_{L^{2}(\mathbb{R}^{d})}. \end{eqnarray}
On the other hand, we have by the Lebesgue theorem that for any $u\in\dot{H}^{s}(\mathbb{R}^{d}),$
\begin{eqnarray}\label{limjn}
\ds\lim_{n\rightarrow\infty} \|J_{n}u-u\|_{\dot{H}^{s}(\mathbb{R}^{d})}=0,
\end{eqnarray}where $$\dot{H}^{s}(\mathbb{R}^{d})=\{u\in L_{loc}^{1}(\mathbb{R}^{d})|\,\,\|u\|_{\dot{H}^{s}}<\infty\}$$ and $$\|u\|_{\dot{H}^{s}}=(\int_{\mathbb{R}^{d}}|\xi|^{2s}|\hat{u}(\xi)|^{2})^{\frac{1}{2}}$$

 Let us consider the approximate system:
\begin{equation}\label{sn}\left\{%
\begin{array}{lll}
\partial_t\rho_{n}+J_{n}div_{x}(J_{n}\rho_{n} J_{n}E_{n}) - \triangle J_{n}\rho_{n}=0,& \; & \mbox{in }(0,\;T)\times\mathbb{R}^2,\\
\partial_{t}E_{n}-curl_{x}J_{n}B_{n}=-j_{n},& \; & \mbox{in }(0,\;T)\times\mathbb{R}^2,\\
\partial_{t}B_{n}+curl_{x}J_{n}E_{n}=0,& \; & \mbox{in }(0,\;T)\times\mathbb{R}^2,\\
div_{x}(J_{n}E_{n})=J_{n}\rho_{n},& \; & \mbox{in }(0,\;T)\times\mathbb{R}^2,\\
div_{x}B_{n}=0,& \; & \mbox{in }(0,\;T)\times\mathbb{R}^2,\\
j_{n}= J_{n}(J_{n}\rho_{n} J_{n}E_{n}-\nabla_xJ_{n}\rho_{n}),& \; & \\
\end{array}%
\right.\end{equation}
with the following initial data
$$\rho_{n}(t=0)=J_{n}(\rho_0),\,\,B_{n}(t=0)=J_{n}(B_0),\,\,E_{n}(t=0)=J_{n}(E_0).$$
The above system appears as a system of ordinary differential equations on $$L^{2}_{n}=\{u\in L^{2}(\mathbb{R}^{2})|\;J_{n}u=u\}.$$
Indeed, by using (\ref{hjn}) we can transform (\ref{sn}) as a system of ordinary differential equations $$\partial_{t}u_{n}=F(u_{n})$$
where $$u_{n}=(\rho_{n}, E_{n}, B_{n})$$ and $$F(u_{n})=(J_{n} \triangle J_{n}\rho_{n} -J_{n}div_{x}(J_{n}\rho_{n} J_{n}E_{n}), curl_{x}J_{n}B_{n}-j_{n}, -curl_{x}J_{n}E_{n})$$
So we remark that
\begin{eqnarray*}
\|J_{n} \triangle J_{n}\rho_{n}\|_{L^{2}(\mathbb{R}^{2})}\leq C (1+n)^{2}  \|\rho_{n}\|_{L^{2}(\mathbb{R}^{2})}\\
\leq C (1+n)^{2}  \|u_{n}\|_{L^{2}(\mathbb{R}^{2})}
\end{eqnarray*}
by using (\ref{hjn}) and the embedding $H^{s}(\mathbb{R}^{d})\hookrightarrow L^{\infty}(\mathbb{R}^{d})$ for $s>d/2$, we get

\begin{eqnarray*}
\|J_{n}div_{x}(J_{n}\rho_{n} J_{n}E_{n})\|_{L^{2}(\mathbb{R}^{2})} &\leq& C (1+n)\| J_{n}\rho_{n} J_{n}E_{n}\|_{L^{2}(\mathbb{R}^{2})}\\
&\leq& C (1+n) \|J_{n}\rho_{n}\|_{L^{\infty}(\mathbb{R}^{2})}\| J_{n}E_{n}\|_{L^{2}(\mathbb{R}^{2})}\\
&\leq& C_{n} \|\widehat{J_{n}\rho_{n}}\|_{L^1(\mathbb{R}^{2})}\| E_{n}\|_{L^{2}(\mathbb{R}^{2})}\\
&\leq& C_{n} \|\rho_{n}\|_{L^{2}(\mathbb{R}^{2})}\|E_{n}\|_{L^{2}(\mathbb{R}^{2})}\\
&\leq& C_{n} \|u_{n}\|_{L^{2}(\mathbb{R}^{2})}^{2}
\end{eqnarray*}

\begin{eqnarray*}\|curl_{x}J_{n}B_{n}\|_{L^{2}(\mathbb{R}^{2})} &\leq& C (1+n) \ds\sum_{i,j}\|\partial_{i} J_{n}B_{n}^{j}\|_{L^{2}(\mathbb{R}^{2})} \\
&\leq& C (1+n) \| J_{n}B_{n}\|_{H^{1}(\mathbb{R}^{2})}\\
&\leq& C_{n}  \|B_{n}\|_{L^{2}(\mathbb{R}^{2})}\\
&\leq& C_{n}\|u_{n}\|_{L^{2}(\mathbb{R}^{2})}
\end{eqnarray*}
$\|J_{n}(J_{n}\rho_{n} J_{n}E_{n}-J_{n}\nabla J_{n}\rho_{n})\|_{L^{2}(\mathbb{R}^{2})}\leq C \|J_{n}\rho_{n} J_{n}E_{n}\|_{L^{2}(\mathbb{R}^{2})}$
\begin{eqnarray*}
\quad&\,&+C \|\nabla J_{n}\rho_{n}\|_{L^{2}(\mathbb{R}^{2})} \\
&\leq& C_{n}\|J_{n}\rho_{n}\|_{L^{\infty}(\mathbb{R}^{2})}\| J_{n}E_{n}\|_{L^{2}(\mathbb{R}^{2})}+C_{n}\| J_{n}\rho_{n}\|_{H^{1}(\mathbb{R}^{2})}\\
&\leq&  C_{n} \|\rho_{n}\|_{L^{2}(\mathbb{R}^{2})}\|E_{n}\|_{L^{2}(\mathbb{R}^{2})}+ C_{n}\|\rho_{n}\|_{L^{2}(\mathbb{R}^{2})}\\
&\leq&  C_{n}\|\rho_{n}\|_{L^{2}(\mathbb{R}^{2})}(\| E_{n}\|_{L^{2}(\mathbb{R}^{2})}+1) \\
&\leq&  C_{n}\|u_{n}\|_{L^{2}(\mathbb{R}^{2})}(\|u_{n}\|_{L^{2}(\mathbb{R}^{2})}+1) \\
\end{eqnarray*}

So, the usual Cauchy-Lipschitz theorem implies the existence of a unique solution $u_{n}$ of (\ref{sn}) which is in $C^{1}([0, T_{n}[; L^{2}_{n})$ and a strictly positive maximal time $T_{n}$ which verify:
\begin{eqnarray}\label{tn}T_{n}<\infty\quad \Rightarrow \quad \|u_{n}(T_{n})\|_{L^{2}_{n}}=\infty\end{eqnarray}

 But, as $J_{n}^{2}=J_{n}$, we claim that $J_{n}u_{n}$ is also a solution, so uniqueness implies that $J_{n}u_{n}=u_{n}$ and hence, one can remove all the $J_{n}$ in front of $\rho_{n}$, $B_{n}$ and $E_{n}$ keeping
only those in front of nonlinear terms:
\begin{equation}\label{snn}\left\{%
\begin{array}{lll}
\partial_t\rho_{n}+J_{n}div_{x}(\rho_{n}E_{n}) -J_{n} \triangle \rho_{n}=0,& \; & \mbox{in }(0,\;T)\times\mathbb{R}^2,\\
\partial_{t}E_{n}-curl_{x}B_{n}=-j_{n},& \; & \mbox{in }(0,\;T)\times\mathbb{R}^2,\\
\partial_{t}B_{n}+curl_{x}E_{n}=0,& \; & \mbox{in }(0,\;T)\times\mathbb{R}^2,\\
div_{x}E_{n}=\rho_{n},& \; & \mbox{in }(0,\;T)\times\mathbb{R}^2,\\
div_{x}B_{n}=0,& \; & \mbox{in }(0,\;T)\times\mathbb{R}^2,\\
j_{n}= J_{n}(\rho_{n} E_{n}-\nabla_x\rho_{n}),& \; & \\
\end{array}%
\right.\end{equation}
The main goal is to prove that $T_{n}$ can be taken to be equal to $+\infty$ and that we have some local in time estimate which are uniform in $n$. Then, one can pass to the limit and recover a solution of the initial system (\ref{s1}).

As $J_{n}$ is a Fourier multiplier, it commutes with constant coefficient differentiations and hence, the energy estimate (\ref{ident}) still holds:
 \begin{eqnarray}\label{identn}\frac{1}{2}[\|\rho_{n}\|^{2}_{L^{2}}+\|E_{n}\|^{2}_{L^{2}}+\|B_{n}\|^{2}_{L^{2}}](t)
 +\int_{0}^{t}\|\nabla\rho_{n}\|^{2}_{L^{2}}\\\nonumber
\leq\frac{1}{2}[\|J_{n}(\rho_{0})\|^{2}_{L^{2}}+\|J_{n}(E_{0})\|^{2}_{L^{2}}+\|J_{n}(B_{0})\|^{2}_{L^{2}}]\\\nonumber
\leq\frac{1}{2}[\|\rho_{0}\|^{2}_{L^{2}}+\|E_{0}\|^{2}_{L^{2}}+\|B_{0}\|^{2}_{L^{2}}]\leq C_{0}. \end{eqnarray}
This implies that by (\ref{tn}) the $L^{2}$ norm of $(\rho_{n}, B_{n}, E_{n})$ is controlled and hence, $T_{n} =+\infty$ for all $n\in\mathbb{N}$. Moreover, the estimates performed in the previous section apply in the same way to the system (\ref{snn}) and hence the a priori estimates derived there still hold (with bounds which are independent of n), namely we have:
 \begin{eqnarray}
\|F_{n}(t)\|_{H^{1}} \leq C(e+\|F_{0}\|_{H^{1}})e^{C_{0}t}
\end{eqnarray}
and
\begin{eqnarray}
\|\rho_{n}\|_{L^{1}H^{1}}\leq C (e+\|F_{0}\|_{H^{1}})e^{C_{0}T}.
\end{eqnarray}
Now, using that $E_{n},\;B_{n}\in L^{\infty}(0,T; L^{2})$ we can see that $\partial_{t}B_{n}=curl E_{n}$ is bounded in $L^{\infty}(0,T; H^{-1})$ and $\partial_{t}E_{n}=curl B_{n}-j_{n}$ is bounded in $L^{\infty}(0,T; H^{-1})+L^{1}H^{1}$. We also have, $\partial_{t}\rho_{n}=J_{n}\triangle\rho_{n}-J_{n}div (\rho_{n}E_{n})$. Since we know that $\rho_{n}$ is bounded in $L^{2}(0,T; \dot{H}^{1})$ then $\triangle\rho_{n}$ is bounded in $L^{2}(0,T; \dot{H}^{-1})$ and recall that $\rho_{n}$ is bounded in $L^{2}(0,T; L^{\infty})$ and $E_{n}$ is bounded in $L^{\infty}(0,T; L^{2})$ so due to lemma \ref{rl1} we get that for all $T >0$, there exists a constant $C_{T}$ such that
\begin{eqnarray}
\|\partial_{t}\rho_{n}\|_{L^{2}(0,T; H^{-1})}\leq C_{T}\mbox{ and } \|\partial_{t}F_{n}\|_{L^{2}(0,T; H^{-1})}\leq C_{T}.
\end{eqnarray}
Hence, extracting a subsequence, standard compactness arguments allow us to pass to the limit in (\ref{snn}). This yields the existence of a solution $(\rho, E, B)$ to (\ref{s1}) (see for instance \cite{Mas}) with the initial data (\ref{d1}).
\subsection{Uniqueness of solutions}
Here, we prove the uniqueness of solutions to (\ref{s1}) in $L^{\infty}(0,T; L^{2})\cap L^{2}(0,T; H^{1})\times L^{\infty}(0,T; H^{1})\times L^{\infty}(0,T; H^{1})$.
Actually, we prove here a uniqueness result slightly stronger than the one stated in the theorem since we do not require the continuity in time. This actually is a very small improvement since one can get the continuity just from the fact that $(\rho_{i},E_{i},B_{i} )$ solves the system. Take $(\rho_{1},E_{1},B_{1})$ and $(\rho_{2},E_{2},B_{2})$ two solutions of (\ref{s1}) with the same initial condition (\ref{d1}) and such that for $i = 1, 2$, we have $\rho_{i}\in L^{\infty}(0,T; L^{2})\cap L^{2}(0,T; H^{1})$ and $E_{i}, B_{i} \in L^{\infty}(0,T; H^{1})$.
We denote $\rho = \rho_{2} -\rho_{1}$, $E = E_{2} -E_{1}$, $j = j_{2} - j_{1}$ and $B = B_{2 }- B_{1}$. We have:
\begin{equation}\label{u1}\left\{%
\begin{array}{lll}
\partial_t\rho+\nabla_{x}\rho\,E+\rho^{2}-\triangle\rho =0,& \; & \mbox{in }(0,\;T)\times\mathbb{R}^2,\\
\partial_{t}E-curl_{x}B=-j,& \; & \mbox{in }(0,\;T)\times\mathbb{R}^2,\\
\partial_{t}B+curl_{x}E=0,& \; & \mbox{in }(0,\;T)\times\mathbb{R}^2,\\
div_{x}E=\rho,& \; & \mbox{in }(0,\;T)\times\mathbb{R}^2,\\
div_{x}B=0,& \; & \mbox{in }(0,\;T)\times\mathbb{R}^2,\\
j= \rho E-\nabla_x\rho,& \; & \\
\end{array}%
\right.\end{equation}
We denote $X =L^{\infty}(0,T; L^{2})\cap L^{2}(0,T; H^{1})$. We also denote $Y = X\times L^{\infty}(0,T; H^{1})\times L^{\infty}(0,T; H^{1})$ and use that $X\subset L^{2}(0,T; H^{1})$.\\

Multiplying the first equation of (\ref{u1}) by $\rho$ and integrating by parts,
\begin{eqnarray*}
 \frac{1}{2}\partial_{t}\|\rho\|^{2}_{L^{2}}+\|\nabla\rho\|^{2}_{L^{2}}&=& -\int\rho\nabla_{x}\rho\,E dx-\int\rho^{3}dx\\
 &=& -\int\nabla_{x}(\frac{\rho^{2}}{2})\,E dx-\int\rho^{3}dx\\
  &=& \frac{1}{2}\int\rho^{3} dx-\int\rho^{3}dx\\
   &=& -\frac{1}{2}\int\rho^{3} dx\\
&\leq&\frac{1}{2}\|\rho\|_{L^{\infty}}\|\rho\|_{L^{2}}^{2}. \end{eqnarray*}
Now, integrating the result
with respect to t. Thus we obtain
\begin{eqnarray}\label{ro}
\|\rho(t)\|^{2}_{L^{2}}+2\int_{0}^{t}\|\nabla\rho\|^{2}_{L^{2}}d\tau \leq\|\rho_{0}\|^{2}_{L^{2}}+\int_{0}^{t}\|\rho\|_{L^{\infty}}\|\rho\|_{L^{2}}^{2} d\tau.
 \end{eqnarray}
Hence, by Gronwall lemma, we obtain
\begin{eqnarray}\label{ro1}
\|\rho(t)\|^{2}_{L^{2}} \leq\|\rho_{0}\|^{2}_{L^{2}}\exp(\int_{0}^{t}\|\rho\|_{L^{\infty}} d\tau)
 \end{eqnarray}
that $\|\rho(t)\|_{L^{2}}=0$ since $\rho_{0}=0$. Therefore, we deduce from (\ref{ro}), we get $\|\rho\|_{X}=0$. On the other hand, we have,
 \begin{eqnarray*}
 \|F\|_{L^{\infty}H^{1}}&\leq& C \|j\|_{L^{1}H^{1}}\\
\,&\leq& C \|\rho E\|_{L^{1}H^{1}}+ C\|\nabla_x\rho\|_{L^{1}H^{1}}\\
\,&\leq&C \|\rho\|_{L^{1}L^{\infty}}\| E\|_{L^{\infty}H^{1}}+ C\|\nabla_x\rho\|_{L^{1}L^{2}}+C\|\nabla_x^{2}\rho\|_{L^{1}L^{2}}\\
 \,&\leq&C \|\rho\|_{L^{1}L^{\infty}}\| F\|_{L^{\infty}H^{1}}+ C\|\rho\|_{L^{1}H^{1}}+C T^{1/2}\|\nabla_x^{2}\rho\|_{L^{2}_{t,x}}\\
  \,&\leq&C T^{1/2}\| F\|_{L^{\infty}H^{1}}+ C T^{1/2}\|\rho\|_{L^{2}H^{1}}+C T^{1/2}\|\nabla_x^{2}\rho\|_{L^{2}_{t,x}}\\
    \,&\leq&C T^{1/2}\| F\|_{L^{\infty}H^{1}}+ C T^{1/2}\|\rho\|_{X}+C T^{1/2}\|\nabla_x^{2}\rho\|_{L^{2}_{t,x}}
  \end{eqnarray*}

Choosing T small enough, we get $F = 0$, which yields the uniqueness of the solution on a small time interval. One can then repeat the argument and get the uniqueness on the whole real line.

\noindent{\bf Acknowledgments.} {\it  We are very grateful  to N. Masmoudi for  interesting
 discussions around the questions dealt with in this paper.}


\end{document}